\documentclass{amsart}

\usepackage{amsthm,amsmath,amssymb}
\usepackage[numbers]{natbib}
\usepackage{tikz}
\usepackage{booktabs}
\usepackage{algorithm}
\usepackage{algorithmic}
\usepackage{afterpage}
\usepackage{caption}

\usetikzlibrary{matrix}
\usetikzlibrary{positioning}

\newtheorem{theorem}{Theorem}

\newtheorem{proposition}[theorem]{Proposition}

\newcommand{\etal}{\textit{et al.}}
\newcommand{\Z}{\mathbb{Z}}

\newcommand{\zero}{\hat{0}}

\tikzset{edge0/.style=dashed}                  
\tikzset{edgea/.style={red, ultra thick,->}}   
\tikzset{edgeb/.style={very thick,->}}        
\tikzset{edger/.style={very thick,<-}}        
\tikzset{edges/.style={cyan,ultra thick,->}}   
\tikzset{edgex/.style=ultra thick}             

\tikzset{eld/.style={draw, circle, inner sep=0, minimum size=0.15cm}}
\tikzset{ele/.style={draw, circle, inner sep=0.05cm, minimum size=0.4cm}}
\tikzset{ell/.style={draw, circle, inner sep=0.05cm, minimum size=0.5cm}}
\tikzset{elh/.style={draw, circle, inner sep=0.05cm, minimum size=0.5cm, blue, ultra thick}}

\setlength{\tabcolsep}{4.9pt}

\title{Fast M\"obius inversion in semimodular lattices and U-labelable posets}


\author{Petteri Kaski}
\author{Jukka Kohonen}
\address{Department of Computer Science\\ Aalto University, Espoo, Finland}
\email{petteri.kaski@aalto.fi}
\email[Corresponding author]{jukka.kohonen@aalto.fi}

\author{Thomas Westerb\"ack}
\address{Department of Mathematics and Systems Analysis\\ Aalto University, Espoo, Finland}
\email{thomas.westerback@aalto.fi}

\begin{document}

\begin{abstract}
  We consider the problem of fast zeta and M\"obius transforms in
  finite posets, particularly in lattices.  It has previously been
  shown that for a certain family of lattices, zeta and M\"obius
  transforms can be computed in $O(e)$ elementary arithmetic
  operations, where $e$ denotes the size of the covering relation.  We
  show that this family is exactly that of geometric lattices.  We
  also extend the algorithms so that they work in $e$ operations for
  all semimodular lattices, including chains and divisor lattices.
  Finally, for both transforms, we provide a more general algorithm
  that works in $e$~operations for all R-labelable posets and their
  non-graded generalization, which we call U-labelable.

  \vspace{0.2cm}
  Mathematics Subject Classifications: 06C10, 06A07, 68W40, 68R05
\end{abstract}


\maketitle

\section{Introduction}
\label{sec:intro}

Fast methods for computing zeta and M\"obius transforms in finite
posets are important for many algorithms of combinatorial nature, such
as graph coloring \cite{bjorklund2007} and fast Fourier transforms on
inverse semigroups \cite{malandro2010}.  From an algebraic
perspective, these transforms are basis-changing isomorphisms
analogous to the fast Fourier transform \cite{bjorklund2015}.  From a
computational perspective, the zeta transform is related to
\emph{ensemble computation}, where one is required to compute several
different products of inputs, possibly sharing common subexpressions,
and the challenge is to minimize the number of elementary
multiplications perfomed (see Garey and Johnson~\cite{garey1979}).

Let $(P,\le)$ be a finite partially ordered set, or \emph{poset}, with
$|P|=v$ elements, and let $f : P \to A$ be a function to an abelian
group $A$.  The \emph{zeta transform} of $f$ is the function $g:
P \to A$ such that, for all $y \in P$,
\begin{equation*}
  g(y) = \sum_{x \le y} f(x).
\end{equation*}

For computational purposes, the function $f$ can be represented as a
$v$-element row vector $\vec{f} = (f(x))_{x \in P}$, and $g$
similarly.  The transformation $\vec{f} \mapsto \vec{g}$ is linear,
and defined by the $v \times v$ matrix~$\zeta$, where $\zeta_{xy} = 1$
if $x\le y$, and $0$ otherwise.  The obvious way to compute the zeta
transform is to perform the matrix-vector multiplication
$\vec{g}=\vec{f}\zeta$, incurring $O(v^2)$ elementary additions.
However, the special structure of the transformation gives hope of
performing it faster, in particular for posets of some specific form.

For a given poset, we represent the computation from~$f$ to its zeta
transform~$g$ as a \emph{straight-line program} (see B\"urgisser
\etal~\cite{burgisser1997}), a sequence of elementary pairwise
arithmetic operations (additions and subtractions) to be executed in
turn, without loops or conditional statements.  We seek to minimize
the length of the program, that is, the number of arithmetic
operations performed.

The zeta transform is always invertible, since $\zeta$ is an upper
triangular matrix with ones on the main diagonal, assuming the matrix
lists the elements of $P$ in a linear order that is an extension
of~$P$.  We also want to construct a short straight-line program for
the inverse transform $\mu = \zeta^{-1}$, called the \emph{M\"obius
  transform}.  The matrices $\zeta$ and $\mu$ are in fact
representations of the zeta and M\"obius functions of the poset.  We
refer to Stanley~\cite{stanley2011} for further discussion.

\subsection{Measures of the complexity}

We relate the program length to certain parameters characterizing the
complexity of the poset: the number of elements or vertices ($v$), the
number of join-irreducible elements ($n$), and the number of edges in
the Hasse diagram of the poset ($e$).

In a lattice we always have $v-1 \le e \le vn$.\footnote{The first
  inequality holds because every non-maximal element has at least one
  upward edge.  The latter inequality follows by considering the
  upward edges of an arbitrary element $x$ and the spectrum map
  $\varphi$, as defined in Section~\ref{sec:preliminaries}.  If both
  $y \gtrdot x$ and $z \gtrdot x$, then $\varphi(y) \setminus
  \varphi(x)$ and $\varphi(z) \setminus \varphi(x)$ must be disjoint,
  otherwise $y$ and $z$ would have two incomparable lower bounds.
  Hence $x$ has at most $n$ upward edges, and the whole lattice has $e
  \le vn$.}  As~an example where $e$ is large, consider the lattice of
subsets of an $n$-element set.  This lattice has $v=2^n$ elements, of
which $n$ are join-irreducible (the singletons), and $e=2^{n-1}n =
\Theta(vn)$ edges.  Both zeta and M\"obius transforms can be computed
in $\Theta(e)=\Theta(vn)$ operations by Yates's
construction~\cite{yates1937}.  As~an example where $e$ is small,
consider the $v$-element chain, which has $n=e=v-1$, thus $vn =
\Theta(e^2)$.  In a chain both transforms are easily computed in $e$
operations.

The parameter $e$ also appears in a lower bound by
Kennes~\cite{kennes1992}: for any lattice, any sequence of additions
that computes the zeta transform has length at least~$e$.  Note that
this lower bound does not apply to posets in general; as a
counterexample, Bj\"orklund \etal~\cite[\S 6]{bjorklund2015} exhibit a
non-lattice bipartite poset whose zeta transform can be computed in
$O(\sqrt{e})$ additions, using a construction by
Valiant~\cite{valiant1986}.

For any lattice, Bj\"orklund \etal~\cite{bjorklund2015} construct both
programs with length $O(vn)$.  This upper bound is always valid but
possibly quite crude.  A tighter length bound $O(e)$ applies for
lattices fulfilling the condition that for any element~$x$ and any
join-irreducible element~$i$ that is not below $x$, the join $x \vee
i$ covers~$x$.  Examples of such lattices include the subset lattice,
the partition lattice, and the lattice of subspaces of a finite vector
space.  For some lattices, the resulting program length is highly
dependent upon the choice of certain details in the construction.  As
a simple example, with the $v$-element chain one choice leads to the
optimal $O(e)$ additions, while another choice leads to $\Theta(e^2)$
additions.  We seek here a better understanding of posets that admit
fast computation of the zeta and M\"obius transforms.

\subsection{Our results}

First we show that the aforementioned condition by Bj\"orklund
\etal~\cite{bjorklund2015} is equivalent to the the lattice being
\emph{geometric} (semimodular and atomic).  Thus in any geometric
lattice both transforms can be done in $O(e)$ arithmetic operations.
This result also provides an alternative ``single-axiom''
characterization of geometric lattices.

Second, we show that atomicity is not needed: in any
\emph{semimodular} lattice, the construction can be done in a manner
that yields a program of length~$e$, matching Kennes's lower bound.
The optimal straight-line programs for chains now follow as a special
case, as well as for other non-atomic semimodular lattices such as the
lattice of positive divisors of an integer.

Third, for further generality, we show that if a poset admits a
certain kind of \emph{edge labeling}, then the transforms can be done
in exactly $e$ arithmetic operations by following the edges in an
order implied by their labels.  The idea of such labelings goes back
to Stanley~\cite{stanley1972}, who studied edge labeling in the
context of counting chains in a lattice; for bibliography see
Stanley~\cite[Chapter~3]{stanley2011}.  The class of posets admitting
zeta and M\"obius transforms in $e$~operations now encompasses the
family of \emph{R-labelable} posets, and their non-graded
generalization, which we will call \emph{U-labelable}.  Semimodular
lattices are included as a special case, as well as lower semimodular
lattices (by duality) and supersolvable lattices.

The new results are summarized in Table~\ref{table:summary} in context
with previous results.

\begin{table}[H]
  \centering
  \begin{tabular}{lll}
    \toprule
    Poset family & Program length & Reference \\
    \midrule
    all lattices            & $O(v^2)$     & matrix-vector multiplication \\
    subset lattices         & $O(vn)=O(e)$ & Yates~\cite{yates1937} \\
    distributive lattices (zeta only)   & $O(vn)$      & Parviainen and Koivisto \cite{parviainen2010} \\
    all lattices            & $O(vn)$      & Bj\"orklund \etal~\cite{bjorklund2015} \\
    lattices with a certain condition & $O(e)$       & Bj\"orklund \etal~\cite{bjorklund2015} \\
    geometric lattices      & $O(e)$       & new (\S2) \\
    semimodular lattices    & $O(e)$       & new (\S3) \\
    R-, U-labelable posets  & $O(e)$       & new (\S4) \\
    \bottomrule
  \end{tabular}
  \caption{Program lengths for the zeta and M\"obius
    transforms.}
  \label{table:summary}
\end{table}

\section{Preliminaries}
\label{sec:preliminaries}

For a general treatment on posets and lattices we refer to
Gr\"atzer~\cite{gratzer2011} and Stanley~\cite{stanley2011}.  In the
present work a poset $P$ or a lattice $L$ is always finite.  The
covering relation is denoted by $\lessdot$, and the set of covering
pairs, or edges of the Hasse diagram, is $E = \{(x,y) : x \lessdot
y\}$.  An element $x \in P$ is \emph{join-irreducible} if it covers
exactly one element, and $I$ is the set of join-irreducible elements.
Poset size is characterized by three quantities: the number of
elements $v=|P|$; the number of join-irreducible elements $n=|I|$; and
the number of edges $e = |E|$.



Without loss of generality, we assume that the join-irreducible
elements of a lattice are denoted by integers $I=[n]$.  The
\emph{spectrum map} of an element $x \in L$ is $\varphi(x) = \{h \in I
: h \le x\}$, and for $i \le n$, the \emph{prefix spectrum map} is
$\varphi_i(x) = \{h \in [i] : h \le x\}$, with $\varphi_0(x) =
\varnothing$.

In a lattice the minimum element is $\zero$, and an element that
covers $\zero$ is an \emph{atom}.  A~lattice is \emph{atomic} if every
element is a join of atoms.  A~lattice is \emph{(upper) semimodular},
if for any two elements $x$ and $y$ such that $x$ covers $x \wedge y$,
the join $x \vee y$ in turn covers $y$.  The dual of a semimodular
lattice is \emph{lower semimodular}.  An~atomic semimodular lattice is
\emph{geometric}.  Examples of geometric lattices include subset
lattices and partition lattices; examples of non-atomic semimodular
lattices include chains and divisor lattices (see Stanley \citep[\S
  3.3]{stanley2011}).

Theorem 1.3 of Bj\"orklund \etal~\cite{bjorklund2015} shows that both
zeta and M\"obius transforms can be done in $O(e)$ arithmetic
operations in any finite lattice $L$ fulfilling the following
condition:
\begin{equation}
  x \vee i \gtrdot x \qquad \text{for all $x \in L$, $i \in I$, $x \not\ge i$}.
  \label{eq:x}
\end{equation}
The following theorem provides an alternative characterization.

\begin{theorem} 
  A finite lattice fulfills condition \eqref{eq:x} if and only if it
  is geometric.
  \label{thm:geom}
\end{theorem}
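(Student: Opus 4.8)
The plan is to prove both implications directly from the definitions, using the spectrum map $\varphi$ as the main tool. Recall that condition \eqref{eq:x} says $x \vee i \gtrdot x$ whenever $i \in I$ and $x \not\ge i$.

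\textbf{Geometric $\Rightarrow$ \eqref{eq:x}.} Suppose $L$ is geometric, so it is atomic and semimodular, and let $x \in L$, $i \in I$ with $x \not\ge i$. Since every join-irreducible in a geometric lattice is an atom (if $j$ covered some element $w > \zero$, then $w$ would be a join of atoms, none equal to $j$, forcing $j$ to cover more than one element), $i$ is an atom. Then $x \wedge i$ is strictly below $i$, hence $x \wedge i = \zero$, so $i$ covers $x \wedge i$. By semimodularity, $x \vee i$ covers $x$, which is exactly \eqref{eq:x}.

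\textbf{\eqref{eq:x} $\Rightarrow$ geometric.} Assume \eqref{eq:x} holds. First I would show $L$ is atomic: take any $x > \zero$ and let $a$ be the join of all atoms below $x$; I claim $a = x$. If not, pick a join-irreducible $i \le x$ with $i \not\le a$ (such an $i$ exists because $x$ is a join of join-irreducibles and not all of them lie below $a$). Applying \eqref{eq:x} to $\zero$ and $i$ gives $i = \zero \vee i \gtrdot \zero$, so $i$ is an atom, contradicting $i \le a$. Hence every element is a join of atoms, so in particular every join-irreducible is an atom and $I$ equals the set of atoms. Next I would prove semimodularity: suppose $x$ covers $x \wedge y$; I must show $x \vee y \gtrdot y$. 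Since $x \gtrdot x\wedge y$ and $L$ is atomic, there is an atom $i \le x$ with $i \not\le x \wedge y$, and then $x = (x\wedge y)\vee i$ and $i \not\le y$. By \eqref{eq:x} applied to $y$ and $i$, $y \vee i \gtrdot y$. But $y \vee i = y \vee (x\wedge y) \vee i \ge y \vee x$, and conversely $y\vee i \le y \vee x$ since $i \le x$; so $x \vee y = y \vee i \gtrdot y$, as required.

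\textbf{Main obstacle.} The delicate point is the step ``$x \gtrdot x\wedge y$ in an atomic lattice forces an atom $i\le x$ with $i\not\le x\wedge y$ and $(x\wedge y)\vee i = x$.'' Atomicity gives an atom $i \le x$ with $i \not\le x\wedge y$; then $x\wedge y < (x\wedge y)\vee i \le x$, and since $x$ covers $x\wedge y$ this forces $(x\wedge y)\vee i = x$. This is routine once stated carefully, so I do not expect a genuine difficulty; the argument is short throughout, the only thing to be careful about being that \eqref{eq:x} is applied with the correct element in the role of $x$ in each direction.
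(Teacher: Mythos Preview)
Your proof is correct and follows essentially the same route as the paper's: both directions use atomicity to reduce to the case of an atom $i$, then invoke semimodularity or \eqref{eq:x} directly. Two small remarks: (i) in your atomicity argument the final phrase should read ``contradicting $i \not\le a$'' (since $i$ being an atom below $x$ forces $i \le a$); (ii) the paper's atomicity step is more direct---it simply applies \eqref{eq:x} with $x=\zero$ to an arbitrary join-irreducible $i$ to conclude $i \gtrdot \zero$, hence every join-irreducible is an atom, which is already atomicity since every element is a join of join-irreducibles.
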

\begin{proof}
  For the ``if'' direction, let $L$ be a finite geometric lattice,
  $x\in L$, and $i\in I$ such that $x \not\ge i$.  Since $L$ is atomic,
  $i$ is an atom, and $i \gtrdot \zero = x \wedge i$.  Now by
  semimodularity $x \vee i \gtrdot x$.

  For the ``only if'' direction, let $L$ be a finite lattice where
  \eqref{eq:x} holds.  For any join-irreducible $i$, choosing
  $x=\zero$ shows that $i \gtrdot \zero$, thus $L$ is atomic.

  Let then $x$ and $y$ be elements such that $y \gtrdot x \wedge y = m$.
  Choose a join-irreducible element $i$ such that $i \le y$ but $i
  \not\le x$.  Now $y=m \vee i$, so $x \vee y = x \vee m \vee i = x
  \vee i$, which covers $x$ by condition \eqref{eq:x}.  Thus $L$ is
  semimodular.
\end{proof}

\section{Fast M\"obius inversion in semimodular lattices}

We shall next show that atomicity is unnecessary for fast zeta and
M\"obius transforms: semimodularity by itself suffices to ensure
programs of length~$e$.

\subsection{Fast zeta transform}

We recall the fast zeta transform algorithm of Bj\"ork\-lund
\etal~\cite[\S 3.2]{bjorklund2015}.  While the original algorithm is
described by embedding the lattice $L$ into a set family
$\mathcal{L}$, for the sake of transparency we describe the algorithm
directly in terms of lattice operations.

\begin{algorithm}[b]
  \caption{Fast zeta transform}
  \begin{algorithmic}[1]
    \REQUIRE Lattice $L$ with join-irreducibles $I=[n]$
    \ENSURE Straight-line program that, given $f$, computes its zeta transform $g$
    \FORALL{$x \in L$}
    \PRINT ``$g(x) \leftarrow f(x)$'' \COMMENT{Initialization}
    \ENDFOR
    \FOR{$i = 1, 2, \ldots, n$}
    \FORALL{$x \in L$ such that $x \not\ge i$}
    \STATE $y \leftarrow x \vee i$
    \IF {$\varphi_{i-1}(x) = \varphi_{i-1}(y)$}
    \PRINT ``$g(y) \leftarrow g(y) + g(x)$'' \COMMENT{Addition}
    \ENDIF
    \ENDFOR
    \ENDFOR
  \end{algorithmic}
  \label{alg:fastzeta}
\end{algorithm}

Algorithm~\ref{alg:fastzeta} takes as input a representation of the
lattice structure, and outputs a sequence of additions to be
performed.  Theorem~1.1 of Bj\"orklund \etal~\cite{bjorklund2015}
shows that the resulting straight-line program indeed computes the
zeta transform for any lattice, regardless of how $I$~is ordered.  The
outer loop beginning on line~4 is executed $n$ times, and the inner
loop at most $|L|=v$ times, so at most $O(vn)$ additions are used; but
the precise number may be much smaller, depending on lattice structure
and the ordering of~$I$.  Consider a chain of $v$ elements, with
$n=e=v-1$.  Bottom-up traversal incurs $e$~additions, which is the
optimal number; each addition proceeds along an edge of the chain.  In
contrast, top-down traversal is suboptimal and incurs $\Theta(e^2)$
additions \cite{bjorklund2015}, many of which involve two distant
elements that do not share an edge.

The observation that bottom-up traversal leads to fewer additions can
be generalized to all semimodular lattices.  The following theorem
provides the optimal arrangement, whereby each addition corresponds to
an edge of the lattice.

\begin{theorem} 
  In a semimodular lattice with $e$ edges, the join-irreducible
  elements can be ordered so that Algorithm~\ref{alg:fastzeta}
  generates exactly $e$ additions.
  \label{thm:semizeta}
\end{theorem}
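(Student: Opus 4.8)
The plan is to choose the labeling of the join-irreducibles so that it respects the order of $L$, and then to exhibit an explicit bijection between the additions printed by Algorithm~\ref{alg:fastzeta} and the edges of $L$. Since the printed line ``$g(y) \leftarrow g(y) + g(x)$'' determines the pair $(x,y)$, such a bijection forces the number of additions to be exactly $|E| = e$.

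First I would fix the identification $I = [n]$ so that $j < j'$ in $L$ implies $j < j'$ as integers; such a labeling exists because the order induced on $I$ admits a linear extension. For $x < y$ in $L$, let $i(x,y)$ denote the least integer in $\varphi(y)\setminus\varphi(x)$; this set is nonempty, since otherwise $\varphi(y)\subseteq\varphi(x)$ and then $y = \bigvee\varphi(y) \le \bigvee\varphi(x) = x$, contradicting $x<y$. The bookkeeping step is to check that Algorithm~\ref{alg:fastzeta} prints the line ``$g(y)\leftarrow g(y)+g(x)$'' exactly for those pairs $x<y$ with $x\vee i(x,y) = y$, each such pair being printed once, during outer-loop iteration $i = i(x,y)$: when the algorithm is at outer index $i$ and inner element $x$ with $x\not\ge i$, it forms $y=x\vee i$ and prints iff $\varphi_{i-1}(x)=\varphi_{i-1}(y)$, and since $x\le y$ this last equality says precisely that no join-irreducible with label below $i$ lies below $y$ but not below $x$ --- equivalently, as $i\in\varphi(y)\setminus\varphi(x)$, that $i=i(x,y)$. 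Thus the number of additions equals the number of pairs $x<y$ with $x\vee i(x,y)=y$.

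It then remains to identify this set of pairs with $E$. One inclusion uses nothing beyond the lattice axioms: if $x \lessdot y$ and $i = i(x,y)$, then $i \not\le x$ and $i \le y$ give $x < x \vee i \le y$, whence $x \vee i = y$. For the converse, suppose $x < y$, $i = i(x,y)$, and $x\vee i = y$; I claim $x \lessdot y$. Let $i_*$ be the unique element covered by the join-irreducible $i$, so $i_* = \zero$ when $i$ is an atom. The crux is that $i_* \le x$: every $h \in \varphi(i_*)$ satisfies $h \le i_* < i \le y$, and $h < i$ in $L$ forces $h < i$ as integers by our choice of labeling, so minimality of $i = i(x,y)$ forces $h \le x$; hence $x$ is an upper bound of $\varphi(i_*)$ and $i_* = \bigvee\varphi(i_*) \le x$. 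Since $i_* \le x$ and $i_* \le i$ we get $i_* \le x \wedge i$, and since $x \wedge i \le i$ with $x \wedge i \ne i$ (as $i \not\le x$) while $i \gtrdot i_*$, we conclude $x \wedge i = i_*$; that is, $i$ covers $x \wedge i$. Semimodularity now yields $x \vee i \gtrdot x$, i.e.\ $x \lessdot y$. This establishes the bijection, and with it the exact count~$e$.

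I expect the substantive point to be the inequality $i_* \le x$, which is precisely where the linear-extension property of the labeling enters; everything else is routine once the bookkeeping of the second paragraph is made airtight --- in particular, that no addition is printed twice and that the loop-range condition $x \not\ge i$ is automatically met for the relevant pairs. The concluding ``covers'' deduction is the same application of semimodularity already used in the ``if'' direction of Theorem~\ref{thm:geom}.
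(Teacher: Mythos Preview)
Your proof is correct and follows essentially the same route as the paper's: both choose an ordering of the join-irreducibles compatible with $L$ (the paper orders by rank, you by an arbitrary linear extension of $\le$ restricted to $I$, which is a mild generalization), and the crux in both is showing that the unique $i_*\lessdot i$ satisfies $i_*\le x$ via the spectrum map, so that $i\gtrdot x\wedge i$ and semimodularity yields $y\gtrdot x$. The explicit bijection you set up between printed additions and pairs $x<y$ with $x\vee i(x,y)=y$ is exactly the paper's correspondence between additions and edges, just packaged more symmetrically.
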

\begin{proof}
  Order the join-irreducible elements by increasing rank, breaking
  ties arbitrarily.  Consider the situation when the condition on
  line~7 succeeds.

  Since $i$ is join-irreducible, there is a unique element $k$ such
  that $k \lessdot i$.  Since $k$ has a rank strictly smaller than
  $i$, it follows that $\varphi_{i-1}(k) = \varphi(k)$.  Now, because
  $k \le i \le x \vee i = y$, we have
  \[
  \varphi(k) = \varphi_{i-1}(k) \subseteq \varphi_{i-1}(y) = \varphi_{i-1}(x)
  \subseteq \varphi(x),
  \]
  where the equality $\varphi_{i-1}(y) = \varphi_{i-1}(x)$ is true due
  to the condition on line~7.
  
  Since $\varphi(k) \subseteq \varphi(x)$, it follows that $k \le x$.
  Thus since $i \not\le x$ we must have $x \wedge i = k$, implying $i
  \gtrdot k = x \wedge i$, and then $x \vee i \gtrdot x$ by
  semimodularity.

  We have seen that whenever an addition is generated on line~8, it
  involves elements $x$ and $y$ such that $x \lessdot y$, so we can
  associate the addition with an edge of the Hasse diagram.  No two
  additions are associated with the same edge, so the number of
  additions is at most~$e$.  Conversely, each edge is visited by the
  algorithm, since if $x \lessdot y$, then $\varphi(x) \ne
  \varphi(y)$, and the edge is visited with the smallest $i$ such that
  $\varphi_i(x) \ne \varphi_i(y)$.  Hence the algorithm perfoms
  exactly $e$ additions.
\end{proof}

Figure~\ref{fig:fastzeta} illustrates how the zeta transform proceeds
in a semimodular lattice that has 9~edges.  Nine additions are
performed in four phases, corresponding to the four join-irreducible
elements of the lattice.

\begin{figure}[p]
  \begin{tikzpicture}
    \matrix(a)[matrix of math nodes, column sep=0cm, row sep=0.5cm]{
      && \node[ell](6){6}; \\
      \node[ell](3){3}; && \node[ell](5){5}; && \node[ell](4){4}; \\
      & \node[elh](1){1}; && \node[ell](2){2}; \\
      && \node[ell](0){0}; \\};
    \draw[edgea] (0)--(1);
    \draw[edgea] (2)--(5);
    \draw[edgea] (4)--(6);
    \draw[edge0] (0)--(2);
    \draw[edge0] (1)--(5);
    \draw[edge0] (3)--(6);
    \draw[edge0] (1)--(3);
    \draw[edge0] (5)--(6);
    \draw[edge0] (2)--(4);
  \end{tikzpicture}
  \hfill
  \begin{tikzpicture}
    \matrix(a)[matrix of math nodes, column sep=0cm, row sep=0.5cm]{
      && \node[ell](6){6}; \\
      \node[ell](3){3}; && \node[ell](5){5}; && \node[ell](4){4}; \\
      & \node[ell](1){1}; && \node[elh](2){2}; \\
      && \node[ell](0){0}; \\};
    \draw[edgex] (0)--(1);
    \draw[edgex] (2)--(5);
    \draw[edgex] (4)--(6);
    \draw[edgea] (0)--(2);
    \draw[edgea] (1)--(5);
    \draw[edgea] (3)--(6);
    \draw[edge0] (1)--(3);
    \draw[edge0] (5)--(6);
    \draw[edge0] (2)--(4);
  \end{tikzpicture}
  \hfill
  \begin{tikzpicture}
    \matrix(a)[matrix of math nodes, column sep=0cm, row sep=0.5cm]{
      && \node[ell](6){6}; \\
      \node[elh](3){3}; && \node[ell](5){5}; && \node[ell](4){4}; \\
      & \node[ell](1){1}; && \node[ell](2){2}; \\
      && \node[ell](0){0}; \\};
    \draw[edgex] (0)--(1);
    \draw[edgex] (2)--(5);
    \draw[edgex] (4)--(6);
    \draw[edgex] (0)--(2);
    \draw[edgex] (1)--(5);
    \draw[edgex] (3)--(6);
    \draw[edgea] (1)--(3);
    \draw[edgea] (5)--(6);
    \draw[edge0] (2)--(4);
  \end{tikzpicture}
  \hfill
  \begin{tikzpicture}
    \matrix(a)[matrix of math nodes, column sep=0cm, row sep=0.5cm]{
      && \node[ell](6){6}; \\
      \node[ell](3){3}; && \node[ell](5){5}; && \node[elh](4){4}; \\
      & \node[ell](1){1}; && \node[ell](2){2}; \\
      && \node[ell](0){0}; \\};
    \draw[edgex] (0)--(1);
    \draw[edgex] (2)--(5);
    \draw[edgex] (4)--(6);
    \draw[edgex] (0)--(2);
    \draw[edgex] (1)--(5);
    \draw[edgex] (3)--(6);
    \draw[edgex] (1)--(3);
    \draw[edgex] (5)--(6);
    \draw[edgea] (2)--(4);
  \end{tikzpicture}
  \\[0.25cm]
  \begin{minipage}{0.55\textwidth}
    \captionsetup{width=\linewidth}
    \caption{Fast zeta transform in a semimodular lattice.  In each
      phase the join-irreducible element $i$ being considered is
      highlighted in blue.  Red arrows indicate addition along edges.
      Thick black edges have been visited already, and dashed edges
      are yet unvisited.}
    \label{fig:fastzeta}
  \end{minipage}
  \hfill
  \begin{tabular}{ll}
    \toprule
    \multicolumn{2}{l}{Straight-line program} \\
    \midrule
    Phase 1 & $g(1) \leftarrow g(1) + g(0)$ \\
    & $g(5) \leftarrow g(5) + g(2)$ \\
    & $g(6) \leftarrow g(6) + g(4)$ \\
    \midrule
    Phase 2 & $g(2) \leftarrow g(2) + g(0)$ \\
    & $g(5) \leftarrow g(5) + g(1)$ \\
    & $g(6) \leftarrow g(6) + g(3)$ \\
    \midrule
    Phase 3 & $g(3) \leftarrow g(3) + g(1)$ \\
    & $g(6) \leftarrow g(6) + g(5)$ \\
    \midrule
    Phase 4 & $g(4) \leftarrow g(4) + g(2)$ \\
    \bottomrule
  \end{tabular}
  \\[1cm]
  \begin{tikzpicture}
    \matrix(a)[matrix of math nodes, column sep=0cm, row sep=0.5cm]{
      && \node[ell](6){6}; \\
      \node[ell](3){3}; && \node[ell](5){5}; && \node[elh](4){4}; \\
      & \node[ell](1){1}; && \node[ell](2){2}; \\
      && \node[ell](0){0}; \\};
    \draw[edge0] (0)--(1);
    \draw[edge0] (2)--(5);
    \draw[edge0] (4)--(6);
    \draw[edge0] (0)--(2);
    \draw[edge0] (1)--(5);
    \draw[edge0] (3)--(6);
    \draw[edge0] (1)--(3);
    \draw[edge0] (5)--(6);
    \draw[edges] (2)--(4);
  \end{tikzpicture}
  \hfill
  \begin{tikzpicture}
    \matrix(a)[matrix of math nodes, column sep=0cm, row sep=0.5cm]{
      && \node[ell](6){6}; \\
      \node[elh](3){3}; && \node[ell](5){5}; && \node[ell](4){4}; \\
      & \node[ell](1){1}; && \node[ell](2){2}; \\
      && \node[ell](0){0}; \\};
    \draw[edge0] (0)--(1);
    \draw[edge0] (2)--(5);
    \draw[edge0] (4)--(6);
    \draw[edge0] (0)--(2);
    \draw[edge0] (1)--(5);
    \draw[edge0] (3)--(6);
    \draw[edges] (1)--(3);
    \draw[edges] (5)--(6);
    \draw[edgex] (2)--(4);
  \end{tikzpicture}
  \hfill
  \begin{tikzpicture}
    \matrix(a)[matrix of math nodes, column sep=0cm, row sep=0.5cm]{
      && \node[ell](6){6}; \\
      \node[ell](3){3}; && \node[ell](5){5}; && \node[ell](4){4}; \\
      & \node[ell](1){1}; && \node[elh](2){2}; \\
      && \node[ell](0){0}; \\};
    \draw[edge0] (0)--(1);
    \draw[edge0] (2)--(5);
    \draw[edge0] (4)--(6);
    \draw[edges] (0)--(2);
    \draw[edges] (1)--(5);
    \draw[edges] (3)--(6);
    \draw[edgex] (1)--(3);
    \draw[edgex] (5)--(6);
    \draw[edgex] (2)--(4);
  \end{tikzpicture}
  \hfill
  \begin{tikzpicture}
    \matrix(a)[matrix of math nodes, column sep=0cm, row sep=0.5cm]{
      && \node[ell](6){6}; \\
      \node[ell](3){3}; && \node[ell](5){5}; && \node[ell](4){4}; \\
      & \node[elh](1){1}; && \node[ell](2){2}; \\
      && \node[ell](0){0}; \\};
    \draw[edges] (0)--(1);
    \draw[edges] (2)--(5);
    \draw[edges] (4)--(6);
    \draw[edgex] (0)--(2);
    \draw[edgex] (1)--(5);
    \draw[edgex] (3)--(6);
    \draw[edgex] (1)--(3);
    \draw[edgex] (5)--(6);
    \draw[edgex] (2)--(4);
  \end{tikzpicture}
  \\[0.5cm]
  \begin{minipage}{0.55\textwidth}
    \captionsetup{width=\linewidth}
    \caption{Fast M\"obius transform in a semimodular lattice.  In
      each phase the join-irreducible element $i$ being considered is
      highlighted in blue.  Cyan arrows indicate subtraction along
      edges.  Thick black edges have been visited already, and dashed
      edges are yet unvisited.}
    \label{fig:fastmobius}
  \end{minipage}
  \hfill
  \begin{tabular}{ll}
    \toprule
    \multicolumn{2}{l}{Straight-line program} \\
    \midrule
    Phase 4 & $f(4) \leftarrow f(4) - f(2)$ \\
    \midrule
    Phase 3 & $f(6) \leftarrow f(6) - f(5)$ \\
    & $f(3) \leftarrow f(3) - f(1)$ \\
    \midrule
    Phase 2 & $f(6) \leftarrow f(6) - f(3)$ \\
    & $f(5) \leftarrow f(5) - f(1)$ \\
    & $f(2) \leftarrow f(2) - f(0)$ \\
    \midrule
    Phase 1 & $f(6) \leftarrow f(6) - f(4)$ \\
    & $f(5) \leftarrow f(5) - f(2)$ \\
    & $f(1) \leftarrow f(1) - f(0)$ \\
    \bottomrule
  \end{tabular}
\end{figure}


\subsection{Fast M\"obius transform}

Next we shall show the corresponding result for the M\"obius
transform, which computes $f$ from~$g$.  Again we start with the
algorithm by Bj\"orklund \etal~\cite[\S 3.6]{bjorklund2015} expressed
in terms of lattice operations.

\begin{algorithm}[b]
  \caption{Fast M\"obius transform}
  \begin{algorithmic}[1]
    \REQUIRE Lattice $L$ with join-irreducibles $I=[n]$
    \ENSURE Straight-line program that, given $g$, computes its M\"obius transform $f$
    \FORALL{$x \in L$}
    \PRINT ``$f(x) \leftarrow g(x)$'' \COMMENT{Initialization}
    \ENDFOR
    \FOR{$i = n, n-1, \ldots, 1$}
    \FORALL{$x \in L$ such that $x \not\ge i$}
    \STATE $y \leftarrow x \vee i$
    \IF {$\varphi_{i-1}(x) = \varphi_{i-1}(y)$}
    \PRINT ``$f(y) \leftarrow f(y) - f(x)$'' \COMMENT{Subtraction}
    \ENDIF
    \ENDFOR
    \ENDFOR
  \end{algorithmic}
  \label{alg:fastmobius}
\end{algorithm}

Algorithm~\ref{alg:fastmobius} is similar to the zeta transform
algorithm, with a few crucial changes.  The join-irreducible elements
are visited in reverse order, the roles of $f$~and~$g$ are inversed,
and subtraction replaces the addition on line~8.  Since the conditions
on lines 5--7 are the same as before, the next theorem follows.

\begin{theorem} 
  In a semimodular lattice with $e$ edges, the join-irreducible
  elements can be ordered so that Algorithm~\ref{alg:fastmobius}
  generates exactly $e$ subtractions.
  \label{thm:semimobius}
\end{theorem}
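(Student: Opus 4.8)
The plan is to derive this immediately from Theorem~\ref{thm:semizeta} by observing that the two algorithms emit the same \emph{number} of operations whenever they are given the same labeling of the join-irreducibles. Concretely, for any fixed bijection identifying $I$ with $[n]$, Algorithm~\ref{alg:fastzeta} and Algorithm~\ref{alg:fastmobius} examine exactly the same collection of pairs $(i,x)$ with $x \not\ge i$, form the same join $y = x \vee i$ in each case, and evaluate the same line-7 predicate $\varphi_{i-1}(x) = \varphi_{i-1}(y)$; each algorithm prints one arithmetic operation precisely when that predicate holds. The differences between the two algorithms — the outer loop runs over $i = n, n-1, \ldots, 1$ rather than $i = 1, 2, \ldots, n$, the roles of $f$ and $g$ are interchanged, and line~8 subtracts instead of adds — change only the order in which operations are printed and their sign, not the total count.

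So the proof would go as follows. First I would note that the set of triples $(i,x,y)$ inspected by either algorithm, together with the truth value of the line-7 test on each, depends only on the lattice $L$ and the chosen labeling of $I$, and is in particular insensitive to the direction of the outer loop; hence the number of printed operations, viewed as a function of $L$ and the labeling, is the same for both algorithms. Second, I would invoke the proof of Theorem~\ref{thm:semizeta}: when the join-irreducibles are labeled in order of increasing rank, with ties broken arbitrarily, Algorithm~\ref{alg:fastzeta} prints exactly $e$ additions. Third, I would conclude that with this very labeling Algorithm~\ref{alg:fastmobius} prints exactly $e$ subtractions, which is the assertion of the theorem.

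I do not expect a genuine obstacle here; the only point demanding a little care is spelling out why the two algorithms traverse identical $(i,x)$ pairs, namely that the inner-loop guard $x \not\ge i$ and the computed join $y = x \vee i$ are literally the same in both, while reversing the outer loop merely permutes the successive values of $i$ without altering which values occur. Note also that the correctness of Algorithm~\ref{alg:fastmobius} as a realization of the M\"obius transform is not at issue for the present theorem; it is supplied by Bj\"orklund \etal~\cite[\S 3.6]{bjorklund2015}, so no additional verification is required.
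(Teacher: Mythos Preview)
Your proposal is correct and matches the paper's own proof: both argue that since Algorithms~\ref{alg:fastzeta} and~\ref{alg:fastmobius} share the same conditions on lines~5--7, the same rank-increasing ordering of the join-irreducibles that yields exactly $e$ additions in Theorem~\ref{thm:semizeta} yields exactly $e$ subtractions here. The paper states this slightly more tersely, but the reasoning is identical.
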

\begin{proof}
  Order and name the join-irreducible elements as $1,2,\ldots,n$ by
  increasing rank, breaking ties arbitrarily.  When the condition on
  line~7 succeeds, it holds that $x \lessdot y$, by the same reasoning
  as in Theorem~\ref{thm:semizeta}.  Thus the subtraction on line~8
  can be associated with that edge, and total the number of
  subtractions performed equals the number of edges.
\end{proof}

Figure~\ref{fig:fastmobius} illustrates how the M\"obius transform
proceeds on a semimodular lattice.  Note that join-irreducible
elements are considered in the reverse order $4,3,2,1$.

\section{Fast M\"obius inversion in posets labelable with unique rising chains}
\label{sec:ulabel}

Our next addition algorithm is constructed to proceed along the edges
of a poset.  By this we mean that the straight-line program consists
of $e$~lines of the form
\begin{equation*}
  g(y) \leftarrow g(y) + g(x),
\end{equation*}
one for each edge $(x,y)$, where $x \lessdot y$.  The edges are
visited in an order specified by an \emph{edge labeling} $\lambda: E
\to \Z$.  The algorithm requires the labels to be distinct so that the
order is unambiguous.  The inverse transform is obtained by reversing
the order of operations and replacing additions with subtractions.

\begin{algorithm}[H]
  \caption{Add along edges}
  \begin{algorithmic}[1]
    \REQUIRE Poset $P$ with injective edge labeling $\lambda: E \to \Z$
    \ENSURE Straight-line program
    \FORALL{$x \in P$}
    \PRINT ``$g(x) \leftarrow f(x)$'' \COMMENT{Initialization}
    \ENDFOR
    \FORALL{$(x,y) \in E$ in increasing order of $\lambda$}
    \PRINT ``$g(y) \leftarrow g(y) + g(x)$''  \COMMENT{Addition}
    \ENDFOR
  \end{algorithmic}
  \label{alg:addalong}
\end{algorithm}

\begin{algorithm}[H]
  \caption{Subtract along edges}
  \begin{algorithmic}[1]
    \REQUIRE Poset $P$ with injective edge labeling $\lambda: E \to \Z$
    \ENSURE Straight-line program
    \FORALL{$x \in P$}
    \PRINT ``$f(x) \leftarrow g(x)$'' \COMMENT{Initialization}
    \ENDFOR
    \FORALL{$(x,y) \in E$ in decreasing order of $\lambda$}
    \PRINT ``$f(y) \leftarrow f(y) - f(x)$''  \COMMENT{Subtraction}
    \ENDFOR
  \end{algorithmic}
  \label{alg:subalong}
\end{algorithm}

Both Algorithms~\ref{alg:addalong} and~\ref{alg:subalong} generate
exactly $e$ operations by design, in contrast to
Algorithms~\ref{alg:fastzeta} and~\ref{alg:fastmobius}, which do so
for semimodular lattices but not in general.  We will next formulate a
sufficient condition to ensure that Algorithm~\ref{alg:addalong}
computes the zeta transform (and that consequently
Algorithm~\ref{alg:subalong} computes the M\"obius transform).

Consider two comparable elements $x \le y$ and a unrefinable chain $C$
from $x$ to $y$,
\begin{equation*}
  x = x_0 \lessdot x_1 \lessdot \cdots \lessdot x_m = y.
\end{equation*}
With a labeling $\lambda: E \to \Z$, we say that $C$ is \emph{rising}
if its labels are increasing,
\begin{equation}
  \lambda(x_0,x_1) \le \lambda(x_1,x_2) \le \cdots \le \lambda(x_{m-1},x_m).
  \label{eq:rising}
\end{equation}
We allow here $\lambda$ to be non-injective for compatibility with
Stanley's definition~\cite[\S 3.14]{stanley2011}.  An edge labeling
$\lambda: E \to \Z$ is a \emph{U-labeling} (labeling with unique
rising chains) if, for each pair of elements $x \le y$, there is
exactly one rising chain from $x$ to~$y$.

Now if $\lambda$ is injective and $C$ is a rising chain from $x$ to
$y$, then the inequalities in~\eqref{eq:rising} are strict, and
Algorithm~\ref{alg:addalong} performs additions along $C$ in the chain
order.  Thus the input term $f(x)$ propagates along $C$ to the output
$g(y)$.  Conversely, if $C$ is not rising, then some of the additions
in the chain are performed out of the chain order, and $f(x)$ does
\emph{not} propagate to $g(y)$ along $C$.  Hence we have following
sufficient condition.

\begin{proposition}
  If $\lambda$ is an injective U-labeling, then the straight-line
  programs from Algorithms~\ref{alg:addalong} and~\ref{alg:subalong}
  compute the zeta and M\"obius transforms, respectively.
  \label{prop:uzeta}
\end{proposition}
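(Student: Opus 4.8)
The plan is to track, through the execution of Algorithm~\ref{alg:addalong}, precisely which input values $f(x)$ accumulate into each output variable $g(y)$, and with what integer multiplicity. Enumerate the edges as $\varepsilon_1, \varepsilon_2, \ldots, \varepsilon_e$ in increasing order of $\lambda$, and for $0 \le k \le e$ let $g^{(k)}(w)$ denote the value of the variable $g(w)$ after the first $k$ additions have been executed, so that $g^{(0)}(w) = f(w)$ and $g^{(e)}(w)$ is the final output. The heart of the argument is the invariant
\begin{equation*}
  g^{(k)}(w) = \sum_{x \in P} N_k(x,w)\, f(x),
\end{equation*}
where $N_k(x,w)$ is the number of unrefinable chains from $x$ to $w$ whose edges all lie among $\varepsilon_1, \ldots, \varepsilon_k$ and whose labels are strictly increasing, the empty chain being counted when $x = w$. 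I would prove this by induction on $k$. The base case $k = 0$ is immediate. For the inductive step, write $\varepsilon_k = (a,b)$. Since $\varepsilon_k$ carries the largest label among $\varepsilon_1, \ldots, \varepsilon_k$, any strictly increasing chain built from these edges can use $\varepsilon_k$ only as its final edge; hence for $w \ne b$ the available chains are unchanged, $N_k(x,w) = N_{k-1}(x,w)$, while for $w = b$ the chains ending with $\varepsilon_k$ are exactly the chains counted by $N_{k-1}(x,a)$ followed by $\varepsilon_k$, giving $N_k(x,b) = N_{k-1}(x,b) + N_{k-1}(x,a)$. This matches the single update $g^{(k)}(b) = g^{(k-1)}(b) + g^{(k-1)}(a)$ performed at step $k$.

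Setting $k = e$, every edge is available, so $N_e(x,y)$ counts all unrefinable chains from $x$ to $y$ with strictly increasing labels. Because $\lambda$ is injective, a chain has strictly increasing labels exactly when it is rising in the sense of~\eqref{eq:rising}; thus $N_e(x,y)$ is the number of rising chains from $x$ to $y$. If $x \le y$, the U-labeling hypothesis supplies exactly one rising chain and $N_e(x,y) = 1$; if $x \not\le y$, there is no unrefinable chain from $x$ to $y$ whatsoever, so $N_e(x,y) = 0$. Hence the final value is $g(y) = \sum_{x \le y} f(x)$, the zeta transform.

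For Algorithm~\ref{alg:subalong} I would pass to matrices. For an edge $\varepsilon = (x,y)$ let $U_\varepsilon$ be the $v \times v$ matrix whose only nonzero entry is a $1$ in row $x$, column $y$; since $x \ne y$ we have $U_\varepsilon^2 = 0$. Acting on the row vector of variable values, the operation ``$g(y) \leftarrow g(y) + g(x)$'' is right multiplication by $I + U_\varepsilon$, and the operation ``$f(y) \leftarrow f(y) - f(x)$'' is right multiplication by $I - U_\varepsilon$. The first part of the proof shows that $(I + U_{\varepsilon_1})(I + U_{\varepsilon_2}) \cdots (I + U_{\varepsilon_e}) = \zeta$, and Algorithm~\ref{alg:subalong} realizes the matrix $(I - U_{\varepsilon_e})(I - U_{\varepsilon_{e-1}}) \cdots (I - U_{\varepsilon_1})$. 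Cancelling adjacent factors from the inside out via $(I + U_{\varepsilon_k})(I - U_{\varepsilon_k}) = I - U_{\varepsilon_k}^2 = I$ collapses the product $\zeta \cdot (I - U_{\varepsilon_e}) \cdots (I - U_{\varepsilon_1})$ to $I$, so Algorithm~\ref{alg:subalong} computes $\zeta^{-1} = \mu$, the M\"obius transform.

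The main obstacle is getting the bookkeeping of the first part exactly right: one must be sure that the coefficient of $f(x)$ in $g(y)$ counts rising chains and nothing else, so that a value propagated ``out of chain order'' is neither silently lost nor spuriously double counted. Formulating the invariant in terms of chains that use only the already-processed edges, together with the observation that the newly processed edge always carries the current maximum label, is what makes the induction close cleanly. The remaining ingredients --- injectivity upgrading the weak rising inequalities of~\eqref{eq:rising} to strict ones, the nonexistence of unrefinable chains between incomparable elements, and the telescoping of the elementary matrices --- are routine.
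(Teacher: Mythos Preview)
Your proof is correct and follows essentially the same approach as the paper: the paper argues informally that $f(x)$ propagates to $g(y)$ exactly along rising chains and that the subtractions undo the additions in reverse, whereas you make both points rigorous via the chain-counting invariant $N_k(x,w)$ and the telescoping product of elementary matrices $(I+U_{\varepsilon_k})(I-U_{\varepsilon_k})=I$. The added precision is welcome but the underlying idea is identical.
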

\begin{proof}
  For each pair $x \le y$, the straight-line program from
  Algorithm~\ref{alg:addalong} propagates $f(x)$ up to $g(y)$ exactly
  once, along the unique rising chain from $x$ to $y$.  Thus for each
  element $y \in P$, the output $g(y)$ equals $\sum_{x \le y} f(x)$ as
  required.  Hence the result is the zeta transform.

  The program from Algorithm~\ref{alg:subalong} consists of subtractions
  that undo the additions in reverse order, hence it performs the
  M\"obius transform.
\end{proof}

Below we show two different edge labelings on a poset.  The labeling
on the left has two rising chains from $a$ to $d$, so if additions are
performed in this order, $g(d)$ will incorrectly contain the term
$f(a)$ twice.  On the right is a U-labeling, which leads to the
correct zeta transform $g(a)=f(a)$, $g(b)=f(a)+f(b)$,
$g(c)=f(a)+f(c)$, and $g(d)=f(a)+f(b)+f(c)+f(d)$.

\begin{figure}[H]
  \begin{minipage}{0.49\textwidth}
    \centering
    \begin{tikzpicture}
      \matrix(a)[matrix of math nodes, column sep=0.4cm, row sep=0.6cm]{
        & \node[ell](3){d}; \\
        \node[ell](1){b}; && \node[ell](2){c}; \\
        & \node[ell](0){a}; \\};
      \draw[edgeb] (0)--(1) node[midway, below left]  {1};
      \draw[edgeb] (0)--(2) node[midway, below right] {2};
      \draw[edgeb] (1)--(3) node[midway, above left]  {3};
      \draw[edgeb] (2)--(3) node[midway, above right] {4};
    \end{tikzpicture}
  \end{minipage}
  \hfill
  \begin{minipage}{0.49\textwidth}
    \centering
    \begin{tikzpicture}
      \matrix(a)[matrix of math nodes, column sep=0.4cm, row sep=0.6cm]{
        & \node[ell](3){d}; \\
        \node[ell](1){b}; && \node[ell](2){c}; \\
        & \node[ell](0){a}; \\};
      \draw[edgeb] (0)--(1) node[midway, below left]  {1};
      \draw[edgeb] (0)--(2) node[midway, below right] {3};
      \draw[edgeb] (1)--(3) node[midway, above left]  {4};
      \draw[edgeb] (2)--(3) node[midway, above right] {2};
    \end{tikzpicture}
  \end{minipage}
\end{figure}

A poset that admits a U-labeling is \emph{U-labelable}.  A U-labeling
is not necessarily injective as required by
Algorithms~\ref{alg:addalong} and~\ref{alg:subalong}, but an injective
U-labeling can be produced as follows.

\begin{proposition}
  If a poset has a U-labeling, then it also has an injective
  U-labeling.
  \label{prop:injective}
\end{proposition}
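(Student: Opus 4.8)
The plan is to take a U-labeling $\lambda : E \to \Z$ and perturb it into an injective one while preserving the property that every interval $[x,y]$ has a unique rising chain. The natural way to break ties is to refine the total preorder on edges induced by $\lambda$-values into a total order, and then relabel the edges $1, 2, \ldots, e$ according to that total order. Concretely, fix any linear extension of the relation ``$(x,y) \prec (x',y')$ if $\lambda(x,y) < \lambda(x',y')$'' and define $\lambda'$ to send the $k$-th edge in this order to $k$. By construction $\lambda'$ is injective, and crucially $\lambda'(e_1) < \lambda'(e_2)$ whenever $\lambda(e_1) < \lambda(e_2)$; the only new information in $\lambda'$ is an arbitrary ordering among edges that were $\lambda$-tied.

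The key step is to show that $\lambda'$-rising chains and $\lambda$-rising chains coincide on every interval. One direction is immediate: since $\lambda'$ strictly respects strict $\lambda$-inequalities, a chain that is $\lambda'$-rising (so its $\lambda'$-labels strictly increase) cannot have strictly $\lambda$-decreasing consecutive labels anywhere, hence its $\lambda$-labels are weakly increasing, i.e.\ it is $\lambda$-rising. For the converse, suppose $C : x = x_0 \lessdot x_1 \lessdot \cdots \lessdot x_m = y$ is $\lambda$-rising but \emph{not} $\lambda'$-rising, so for some consecutive pair $\lambda'(x_{j-1},x_j) > \lambda'(x_j,x_{j+1})$; since $\lambda$ is weakly increasing along $C$ this forces $\lambda(x_{j-1},x_j) = \lambda(x_j,x_{j+1})$, a $\lambda$-tie that $\lambda'$ resolves the ``wrong'' way. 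The idea is that we may then swap the linear-extension order of just these two tied edges to obtain another injective labeling under which $C$ \emph{is} rising near $j$; iterating (or, more cleanly, choosing the linear extension well from the start) we can drive $C$ to be $\lambda'$-rising. But this shows that $C$ is $\lambda'$-rising for \emph{some} injective tie-break, which is what we need, provided we argue that the \emph{unique} $\lambda$-rising chain in $[x,y]$ is the one we want. Since $C$ was assumed $\lambda$-rising and intervals have a unique $\lambda$-rising chain, $C$ is \emph{the} rising chain of $[x,y]$; so the cleanest route is: (i) show every interval has \emph{at least} one $\lambda'$-rising chain, namely (a suitable part of) its unique $\lambda$-rising chain after a good tie-break, and (ii) show it has \emph{at most} one, because any two $\lambda'$-rising chains are both $\lambda$-rising by the easy direction, hence equal.

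The main obstacle is that a single global tie-break might simultaneously be ``wrong'' for the rising chains of different intervals, so one cannot naively fix one interval at a time. The fix is to choose the linear extension coherently from the outset: extend $\lambda$ to a total order on $E$ by, among edges sharing a $\lambda$-value, ordering them by the \emph{rank of their lower endpoint} (or any fixed linear extension of the poset applied to lower endpoints), breaking any remaining ties arbitrarily. Then within any $\lambda$-rising chain, consecutive $\lambda$-tied edges $(x_{j-1},x_j)$ and $(x_j,x_{j+1})$ have lower endpoints $x_{j-1} < x_j$, so the first gets the smaller $\lambda'$-value automatically; hence every $\lambda$-rising chain is already $\lambda'$-rising. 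Combined with the easy direction (every $\lambda'$-rising chain is $\lambda$-rising), the $\lambda'$-rising chains in each interval are exactly the $\lambda$-rising chains, of which there is exactly one. Thus $\lambda'$ is an injective U-labeling, completing the proof.
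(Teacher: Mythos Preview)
Your proposal is correct and arrives at essentially the same construction as the paper: refine the $\lambda$-preorder to a total order by breaking ties according to the lower endpoint of each edge (via the poset order, extended arbitrarily to a linear order), then relabel $1,\ldots,e$; the two directions of ``$\lambda$-rising $\Leftrightarrow$ $\lambda'$-rising'' are exactly the paper's argument. One small caveat: your first suggested tie-break, by \emph{rank} of the lower endpoint, is not available in general since U-labelable posets need not be graded (the pentagon is an example in the paper), but your parenthetical alternative---a fixed linear extension of $P$ applied to lower endpoints---is precisely what is needed and what the paper uses.
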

\begin{proof}
  Let $P$ be a poset with a U-labeling $\lambda$.  Order the edges by
  $\lambda$; among edges that have the same label, order by their
  first elements according to the poset order~$\le$.  Break any
  remaining ties arbitrarily.  With the edges thus ordered, define an
  injective labeling $\lambda'$ by assigning labels $1,2,\ldots,e$ in
  order.

  Consider now an unrefinable chain $C$.  If $C$ is not rising under
  $\lambda$, then it contains three elements $s \lessdot t \lessdot u$
  such that $\lambda(s,t) > \lambda(t,u)$.  Then also $\lambda'(s,t) >
  \lambda'(t,u)$, and $C$ is not rising under $\lambda'$.

  Conversely, if $C$ is rising under $\lambda$, then with any three
  consecutive elements $s \lessdot t \lessdot u$ in $C$, we have
  either $\lambda(s,t) < \lambda(t,u)$ or $\lambda(s,t)=\lambda(t,u)$.
  In either case, by construction, $\lambda'(s,t) < \lambda'(t,u)$.
  Hence $C$ is rising under $\lambda'$.

  Since $\lambda$ and $\lambda'$ have the same rising chains,
  $\lambda'$ is also a U-labeling.
\end{proof}

By combining Propositions~\ref{prop:uzeta} and~\ref{prop:injective} we
obtain our main result for U-labelable posets.

\begin{theorem}
  In a U-labelable poset that has $e$~edges, the zeta transform can be
  computed in $e$~additions, and the M\"obius transform in
  $e$~subtractions.
  \label{thm:uzeta}
\end{theorem}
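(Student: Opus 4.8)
The plan is to assemble the statement directly from the two propositions just proved, so the argument should amount to little more than chaining them together. First I would unpack the hypothesis: since $P$ is U-labelable, it carries some U-labeling $\lambda$, which a priori need not be injective. Proposition~\ref{prop:injective} then upgrades this to an \emph{injective} U-labeling $\lambda'$ on the same poset, built by refining the edge order induced by $\lambda$ using the poset order on first elements and then arbitrary tie-breaking.

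Next I would feed $\lambda'$ into Algorithm~\ref{alg:addalong} and Algorithm~\ref{alg:subalong}. Proposition~\ref{prop:uzeta} applies precisely because $\lambda'$ is an injective U-labeling, and it tells us that the straight-line program emitted by Algorithm~\ref{alg:addalong} computes the zeta transform, while the one emitted by Algorithm~\ref{alg:subalong} computes the M\"obius transform. Finally I would count operations: the main loop of each of these two algorithms iterates exactly once over each member of $E$ and prints exactly one arithmetic line per iteration (the initialization assignments do not count as arithmetic operations), so the zeta program contains exactly $e$ additions and the M\"obius program exactly $e$ subtractions.

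I do not anticipate any real obstacle, since all the substantive work is already contained in Propositions~\ref{prop:uzeta} and~\ref{prop:injective}; the only point worth stating explicitly is that the count is \emph{exactly} $e$ rather than merely $O(e)$, which is immediate from the loop structure of Algorithms~\ref{alg:addalong} and~\ref{alg:subalong}. Thus the proof is simply: obtain $\lambda$, pass to $\lambda'$ via Proposition~\ref{prop:injective}, apply Proposition~\ref{prop:uzeta}, and read off the operation count from the algorithms.
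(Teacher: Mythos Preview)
Your proposal is correct and matches the paper's own argument exactly: the theorem is stated as an immediate consequence of combining Propositions~\ref{prop:injective} and~\ref{prop:uzeta}, with the exact operation count read off from the structure of Algorithms~\ref{alg:addalong} and~\ref{alg:subalong}. There is nothing to add.
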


U-labelable posets are a generalization of \emph{R-labelable} posets:
an R-labelable poset is a graded U-labelable poset.  Thus
Theorem~\ref{thm:uzeta} provides fast zeta and M\"obius transforms for
all R-labelable posets, which include supersolvable lattices and
semimodular lattices (see Stanley~\cite[\S 3.14]{stanley2011}).  For a
semimodular lattice an R-labeling is obtained by naming the
join-irreducible elements as $1,2,\ldots,n$ in an order compatible
with the lattice (for example, ordering by rank), and defining
\[
\lambda(s,t) = \min \{i : s \vee i = t\}.
\]

Lower semimodular lattices are U-labelable as well.  More generally,
duality preserves U-labelability: if $\lambda$ is a U-labeling for
poset~$P$, then $\lambda^*(s,t) = -\lambda(t,s)$ is a U-labeling
for~$P^*$.  As an example, the figure below on the left shows an upper
semimodular lattice with a U-labeling.  On the right is its dual with
the derived U-labeling, whereby Algorithm~\ref{alg:addalong} computes
the zeta transform in $9$ additions.  In comparison,
Algorithm~\ref{alg:fastzeta} can use up to $11$ additions (depending
on how $I$ is ordered).

\begin{figure}[H]
  \begin{minipage}{0.49\textwidth}
    \centering
    \begin{tikzpicture}
      \matrix(a)[matrix of math nodes, column sep=0.35cm, row sep=0.8cm]{
        && \node[ele](6){}; \\
        \node[ele](3){}; && \node[ele](5){}; && \node[ele](4){}; \\
        & \node[ele](1){}; && \node[ele](2){}; \\
        && \node[ele](0){}; \\};
      \draw[edgeb] (0)--(1) node[midway,left] {1};
      \draw[edgeb] (2)--(5) node[midway,left] {1};
      \draw[edgeb] (4)--(6) node[midway,left=1mm] {1};
      \draw[edgeb] (0)--(2) node[midway,left] {2};
      \draw[edgeb] (1)--(5) node[midway,left] {2};
      \draw[edgeb] (3)--(6) node[midway,left=1mm] {2};
      \draw[edgeb] (1)--(3) node[midway,left] {3};
      \draw[edgeb] (5)--(6) node[midway,left] {3};
      \draw[edgeb] (2)--(4) node[midway,left] {4};
    \end{tikzpicture}
  \end{minipage}
  \hfill
  \begin{minipage}{0.49\textwidth}
    \centering
    \begin{tikzpicture}
      \matrix(a)[matrix of math nodes, column sep=0.35cm, row sep=0.8cm]{
        && \node[ele](0){}; \\
        & \node[ele](1){}; && \node[ele](2){}; \\
        \node[ele](3){}; && \node[ele](5){}; && \node[ele](4){}; \\
        && \node[ele](6){}; \\};
      \draw[edger] (0)--(1) node[midway,left] {$-1$};
      \draw[edger] (2)--(5) node[midway,left] {$-1$};
      \draw[edger] (4)--(6) node[midway,left=0.5mm] {$-1$};
      \draw[edger] (0)--(2) node[midway,left] {$-2$};
      \draw[edger] (1)--(5) node[midway,left] {$-2$};
      \draw[edger] (3)--(6) node[midway,left=1mm] {$-2$};
      \draw[edger] (1)--(3) node[midway,left] {$-3$};
      \draw[edger] (5)--(6) node[midway,left] {$-3$};
      \draw[edger] (2)--(4) node[midway,left] {$-4$};
    \end{tikzpicture}
  \end{minipage}
\end{figure}

The pentagon lattice, shown below on the left, is not graded but has a
U-labeling, facilitating both transforms in $5$~operations.  The
hexagon lattice (below on the right) cannot be U-labeled.  This would
require rising chains from $p$ to~$r$ and from $q$ to~$s$, implying
that the chain $p \lessdot q \lessdot r \lessdot s$ is rising; but
similarly $p \lessdot t \lessdot u \lessdot s$ would be rising, so
there would be two rising chains from $p$ to~$s$.

\begin{figure}[H]
  \begin{minipage}{0.49\textwidth}
    \centering
    \begin{tikzpicture}[baseline=(current bounding box.center)]
      \matrix[matrix of nodes, column sep=0.5cm, row sep=0.6cm]{
        & \node[ell](1){$s$}; \\
        \node[ell](b){$r$}; \\
        \node[ell](a){$q$}; && \node[ell](c){$t$}; \\
        & \node[ell](0){$p$}; \\};
      \draw[edgeb] (b)--(1) node[midway,above left] {3};
      \draw[edgeb] (a)--(b) node[midway,left] {2};
      \draw[edgeb] (0)--(a) node[midway,below left] {1};
      \draw[edgeb] (c)--(1) node[midway,above right] {4};
      \draw[edgeb] (0)--(c) node[midway,below right] {5};
    \end{tikzpicture}
  \end{minipage}
  \hfill
  \begin{minipage}{0.49\textwidth}
    \centering
    \begin{tikzpicture}[baseline=(current bounding box.center)]
      \matrix(a)[matrix of nodes, column sep=0.5cm, row sep=0.6cm]{
        & \node[ell](top){$s$}; \\
        \node[ell](4){$r$}; && \node[ell](5){$u$}; \\
        \node[ell](1){$q$}; && \node[ell](2){$t$}; \\
        & \node[ell](bottom){$p$}; \\};
      \draw[edgeb] (bottom)--(1);
      \draw[edgeb] (1)--(4);
      \draw[edgeb] (4)--(top);
      \draw[edgeb] (bottom)--(2);
      \draw[edgeb] (2)--(5);
      \draw[edgeb] (5)--(top);
    \end{tikzpicture}
  \end{minipage}
\end{figure}

In the hexagon it is impossible to compute the zeta transform in $e$
pairwise operations (additions or subtractions); at the minimum, $7 =
e+1$ operations are required.  This can be seen by observing that at
least four operations are required to compute the four outputs at $q,
r, t,$ and $u$, and then enumerating the possibilities of computing the
remaining output $g(s)$.

The converse of Theorem~\ref{thm:uzeta} does not hold for posets in
general.  There are posets that are not U-labelable, but admit the
zeta transform in $e$ operations or less (for example the bipartite
poset mentioned in the introduction, augmented with a hexagon on top).
However, for lattices this seems to be an open question: if a lattice
admits the zeta transform in $e$ operations, is it necessarily
U-labelable?

Another open question concerns whether there are any posets where the
zeta transform requires more than $2e$ operations.  The hardest known
instance, in terms of~$e$, seems to be a lattice constructed from
$k$~parallel chains of $k$~elements, adjoined with a common bottom and
a common top, with a total of $e=k(k+1)$ edges.  If only addition is
available, then Theorem~1 of J\"arvisalo \etal~\cite{jarvisalo2012}
implies that computing the zeta transform for this lattice requires
$2e-O(1)$ additions.

\subsection*{Acknowledgements}
The research that led to these results has received funding from the
European Research Council under the European Union's Seventh Framework
Programme (FP/2007-2013) / ERC Grant Agreement 338077 ``Theory and
Practice of Advanced Search and Enumeration.''

We thank Marcus Greferath for the conjecture that became
Theorem~\ref{thm:geom}, and Matti Karppa for helpful comments.


\bibliographystyle{plain}
\bibliography{refs}

\end{document}